\newcommand{\mbf}[1]{\mathbf{#1}}              
\newcommand{\mbs}[1]{{\boldsymbol{#1}}}
\newcommand{\trans}{{\ensuremath{\mathsf{T}}}}  
\newcommand{\trace}{ {\ensuremath{\mathrm{tr}}} }   
\newcommand{\norm}[1]{\left\Vert#1\right\Vert} 
\newcommand{\bbm}{\begin{bmatrix}}
\newcommand{\ebm}{\end{bmatrix}}
\newtheorem{theorem}{Theorem}[section]
\newcommand{\onehalf}{\mbox{$\textstyle{\frac{1}{2}}$}}
\newcommand{\pd}[2]{{\frac{\partial #1}{\partial #2}}} 
\newcommand{\bone}{\mbf{1}}
\newcommand{\bzero}{\mbf{0}}
\newcommand{\mbbRv}[1]{\mathbb{R}^{#1}}
\newcommand{\mbfx}{\mbf{x}}
\newcommand{\mbfy}{\mbf{y}}
\newcommand{\mbfxx}{\mbf{X}}
\newcommand{\mbfyy}{\mbf{Y}}
\newcommand{\mbfz}{\mbf{Z}}
\newcommand{\mbfe}{\mbf{e}}
\newcommand{\mbfv}{\mbf{v}}
\newcommand{\mbfu}{\mbf{u}}
\newcommand{\mbfp}{\mbf{p}}
\newcommand{\mbfh}{\mbf{H}_k}
\newcommand{\mbfb}{\mbf{B}}
\newcommand{\mbfpp}{\mbf{P}}
\newcommand{\mbfk}{\mbf{K}_k}
\newcommand{\dx}{\delta \mbf{x}}
\newcommand{\dy}{\delta \mbf{y}}
\newcommand{\ex}[1]{{\mathbb{E}\left(#1\right)}}
\title{Bounding the Estimation Error Covariance for Nonlinear Systems} 
\begin{document}

\author{Sze Kwan Cheah\footnote{Ph.D. Student, Department of Aerospace Engineering and Mechanics, email: coffeebean@gmail.com.}}
\author{Yingjie Hu\footnote{Ph.D. Student, Department of Aerospace Engineering and Mechanics, email: hu000258@umn.edu.}}

\affil{University of Minnesota, Minneapolis, MN 55455}

\maketitle

\begin{abstract} 

This paper presents preliminary work on computing upper bounds on the estimation error covariance in the framework of the extended Kalman filter. 
The approach taken is using quadratic constraints to bound the dynamic nonlinearities and use of semidefinite programs to find the upper bound of each entry of the estimation error covariance matrix. 

\end{abstract}

\section{Introduction}
Among the most popular techniques for state estimation for nonlinear systems is the Extended Kalman Filter (EKF) \cite{kalman1960new}. 
The EKF may be viewed as composed of two parts.
The first part is the Kalman filter time update or also commonly known as the prediction step that propagates the state estimate as well as the state estimation error covariance. 
The second part is composed of the Kalman filter measurement update or also commonly known as the correction step. Here, the minimum mean-square error estimator (Kalman gain), is computed and is used to optimally weigh the measurement and current state estimate to produce an updated state estimate and corresponding updated estimation error covariance. More details can be found in~\cite{simon2006optimal}.
Within the time update/prediction step, the EKF makes use of a linearization of the nonlinear system's process model to propagate the estimation error covariance. The effect of the system's nonlinearities is not captured, which can result in an unreliable probabilistic description of the estimation error that is unsuitable for safety-critical applications, such as aerospace systems. 

Incorporating information regarding the nonlinearities of a process model in the time-update propagation of the estimation error covariance
has been studied extensively, with methods that include a second-order EKF \cite{gelb1974applied} that attempts to reduce the linearization errors of the EKF using a second-order Taylor series expansion. 
Even though this increase in order improves the estimates, it does not completely capture or bound the effect of the nonlinearties. 
In the unscented Kalman filter \cite{simon2006optimal}, nonlinear transformations on multiple points on the probability distribution function is used to compute estimates of the states and state error covariance. 
While capturing the nonlinearity can provide good estimates of the estimation error covariance, this method does not provide conservative bounds on the computed covariance that may be needed for safety-critical applications. 

Instead of simply improving the estimates of the estimation error covariance, an alternative approach is to find an upper bound of the estimation error covariance matrix.
A commonly accepted means of bounding the covariance matrix is with the Loewner order where the difference between the bounding matrix and the true covariance matrix is positive semidefinite, as discussed in~\cite{noack2017decentralized}. 
Furthermore, \cite{luboldClarkTaylor2022formal} provides 
an overview of the literature on covariance overbounding using a relative definiteness measure of conservatism between the true covariance matrix and its bound. 

Separately, the trace of the estimation error covariance is commonly used as a function to be minimized when considering the optimality of sensor fusion algorithms. 
%
A conservative linear unbiased estimator is proposed in
\cite{forsling2022conservative} by solving a semidefinite program (SDP) that minimizes the covariance trace to estimate the lower and upper bound of the state estimation error covariance.
Similarly, the authors of \cite{gao2015robust} fashioned a linear state estimation algorithm based on the minimax problem for the trace of the state estimation error covariance. 

Propagation of the state estimation error covariance using the process model takes advantage of the system knowledge as the time update equation in the traditional EKF. When used in conjunction with state estimation from measurements, the fused state estimate and the corresponding state estimation error is used for decision making. 
In \cite{li2016new}, the authors 
found the upper and lower bounds of the estimation error covariance matrix but did not consider the possibility of nonlinearities. 
Bounding of the propagated covariance has been studied \cite{yang2002robust} in the context of time-varying uncertain system that is norm-bounded by solving discrete Riccati difference equations.


One important requirement of filter design for safety-critical navigation systems is that it can maintain bounded error uncertainties. In traditional EKF, the error covariance is propagated forward using the linearized dynamic equation of the system. However, this approach does not properly capture the impact of nonlinearities on the covariance matrix and can potentially lead to underestimated error uncertainties, which can cause hazardous consequences, especially for safety-critical navigation. To address this issue, we propose to develop bounds on the error covariance estimates for nonlinear system within the EKF framework. 

This paper provides a preliminary theoretical work and associated algorithm to bound individual entries of the state error covariance matrix that is typically computed in the framework of the EKF. Given that both the lower and upper bound of each element of the matrix is available, it is speculated that an upper bound matrix by definition of positive semidefinite definition of conservative bounds can be obtained. 

\textit{Notation:} 
%
We use $>0$ and $\geq 0$ respectively to denote the positive definiteness and positive semidefiniteness of a matrix.
Similarly, the inequality of $< 0$ and $\leq 0$ respectively denotes negative definite and negative semidefinite matrices. Repeated blocks within symmetric matrices are replaced by $*$ for brevity and clarity.
For simplicity, the identity matrix is written as $\mbf{1}$ and a matrix of zeros is written as $\bzero$.
The basis vector of $\bone_i$ is a vector of zeros with the $i^{th}$ entry of 1. 

\section{Problem Statement}
Consider the nonlinear discrete-time system dynamics of 
\begin{align}
    \mbf{x}_{k+1} &= \mbf{f}_d (\mbf{x}_k, \mbf{u}_k, \mbf{w}_k)    \label{eq:dt_eom0} , \\
    \mbf{y}_{k} &= \mbf{g}_d (\mbf{x}_k, \mbf{v}_k) \label{eq:meas}. 
\end{align}
where $k$ denotes the discrete time step, $\mbfx \in \mathbb{R}^{n_x}$ the state variable, $\mbf{y} \in \mathbb{R}^{n_y}$ the measurement variable and $\mbf{u} \in \mathbb{R}^{n_u}$ the known input. The process noise and measurement noise are respectively $\mbf{w}_k \in \mathbb{R}^{n_w}$ and $\mbf{v}_k \in \mathbb{R}^{n_v}$. Both are assumed zero-mean Gaussian distributed. Their associated covariance is defined as $\mbf{Q} = \mathbb{E}(\mbf{w}_k \mbf{w}_k^\trans)$  and $\mbf{R}=\ex{\mbf{v}_k \mbf{v}_k^\trans}$. 

The EKF \cite{simon2006optimal} uses a linearized time update equation \eqref{eq:dt_eom0} to propagate the state error covariance $\mbfpp^+_k=\mathbb{E}(\delta \mbf{x}_{k} \delta \mbf{x}_{k}^\trans)$ from time step $k$ to the next time step $\mbfpp^-_{k+1}=\mathbb{E}(\delta \mbf{x}_{k+1} \delta \mbf{x}_{k+1}^\trans)$. It further utilizes a linearized measurement equation \eqref{eq:meas} to obtain the \textit{a posteriori} covariance estimate $\mbfpp^+_{k+1}$ using a computed Kalman gain $\mbfk$. 
%
%
Instead of linearization the process and measurement equation, they are first decomposed to the linear and nonlinear parts for future analysis. 

Defining the \textit{a priori estimate} at time $k$ as $\check{\mbf{x}}_k$, the estimation error time update equation can be expressed as 
\begin{align}
    \delta \mbf{x}_{k+1} &= \mbf{f}_d (\mbf{x}_k, \mbf{u}_k, \mbf{w}_k) - 
    \mbf{f}_d (\check{\mbf{x}}_k, \mbf{u}_k, \bzero). 
    \label{eq:dt_eom}
\end{align}
A decomposition can be performed, without loss of generality, at each time step of \eqref{eq:dt_eom} as 
\begin{align}
    \delta \mbf{x}_{k+1} &= \mbf{A}_k \delta \mbf{x}_k + \mbf{B}_p \mbf{p}_k + \mbf{B}_w \mbf{w}_k, 
    \label{eq:eom_dt_1} \\
    \mbf{p}_k &= \grave{\mbs{\Delta}}_k (\mbs{\theta}_k),
    \label{eq:eom_dt_2} \\
    \mbs{\theta}_k &= \mbf{C}_\theta \delta \mbf{x}_k,  \label{eq:eom_dt_last}
\end{align}
where $\left.\mbf{A}_k=\pd{\mbf{f}}{\mbf{x}}\right|_{(\check{\mbf{x}}_k,\mbf{u}_k)}$ varies with both state and input. All matrices including $\mbf{B}_p , \mbf{B}_w$ and $\mbf{C}_\theta$, as well as the function $\grave{\mbs{\Delta}}_k$ change at every time index to reflect \eqref{eq:dt_eom}. 

%

%
Similarly, the nonlinear measurement equation \eqref{eq:meas} may be equivalently decomposed without approximation as
\begin{align}
    \dy_k &= \mbfy - \check{\mbfy} = \mbfh \dx_k + \mbfb_\rho \mbs{\rho}_k + \mbfb_v \mbfv_k , \label{eq:meas_comp} \\
    \mbs{\rho}_k &= \acute{\mbs{\Delta}}_k (\mbs{\mu}) , \label{eq:delta} \\
    \mbs{\mu} &= \mbf{C}_\mu \dx_k. \label{eq:meas_end}
\end{align}
Note that $\dy$ is a decomposition of  $\dy_k = \mbf{g}(\mbfx_k, \mbfu_k)+ \mbfb_v \mbfv_k - \mbf{g}(\check{\mbfx}_k, \mbfu_k)$. 
Furthermore, $\dx_k = \mbfx_k - \check{\mbfx}_k$ 
and $\check{\mbfy}$ is the predicted measurement. 
%
The \textit{a posteriori} state estimate $\hat{\mbfx}_k$ defined as
\begin{align}
    \hat{\mbfx}_k &= \check{\mbfx}_k + \mbfk (\mbfy_k - \check{\mbfy}_k) , \label{eq:update}
\end{align}
where the Kalman gain $\mbfk$ is assumed to be available. 
%
%
For the purpose of having more compact equations, the \textit{a posteriori} state error is defined as
$\hat{\mbfe}_k = \mbfx_k - \hat{\mbfx}_k$ where $\mbfx_k$ is the true state. 
The \textit{a posteriori} state estimate error is
    \begin{align}
        \hat{\mbfe}_k &= \mbfx_k - \hat{\mbfx}_k \\
        &= \mbfx_k - ( \check{\mbfx}_k + \mbfk (\mbfy_k - \check{\mbfy}_k) ) \\
        &= \dx_k - \mbf{K} \dy_k. 
        \label{eq:error_apos}
    \end{align}
when combined with \eqref{eq:update}. The \textit{a posteriori} covariance is thus $\mbfpp_k^+ = \ex{\hat{\mbfe}_k \hat{\mbfe}_k^\trans}$.


We assume these nonlinear equations are static and has no memory so that quadratic constraints may be used to bound the input-output relationship. 
A large class of quadratic constraints \cite{ref_21_megretski1997system} are available to choose from. 
For an input $\mbf{b} \subset \mathbb{B} \in \mathbb{R}^{n_b} $ and output $\mbf{d} \in \mathbb{R}^{n_d}$, the quadratic constraint satisfies the inequality equation of 
\begin{align}
    \bbm \mbf{b} \\ \mbf{d} \ebm^\trans
    \mbs{\Lambda}
    \bbm \mbf{b} \\ \mbf{d} \ebm
    \geq 0,
    \label{eq:QC_general0}
\end{align}
where the entries of $\mbs{\Lambda}$ define the type of quadratic constraint. For example, a norm bound quadratic constraint that satisfies $\norm{\mbf{d}}_2 \leq \gamma \norm{\mbf{b}}_2$, $\gamma>0, \gamma \in \mathbb{R}$ has $\mbs{\Lambda} = \bbm \gamma^2 \bone & \bzero \\ \bzero & -\bone \ebm$. 
%
We may further extend the example by applying a quadratic constraint inequality \eqref{eq:QC_general0} to the nonlinear equation $\grave{\mbs{\Delta}}$ in \eqref{eq:eom_dt_2} that has $\mbs{\theta}_k$ as input and $\mbf{p}_k$ as output. 
By defining $\mbf{U} = \mathrm{diag}(\mbf{C}_\theta, \bone)$, then $\bbm \mbs{\theta}_k & \mbf{p}_k \ebm^\trans = \mbf{U} \bbm \delta \mbf{x}_k & \mbf{p}_k \ebm^\trans$. The quadratic constraint can be expressed as
\begin{align}
    \bbm \delta \mbf{x}_k \\ \mbf{p}_k \ebm^\trans
    \mbf{U}^\trans \mbs{\Lambda} \mbf{U}
    \bbm \delta \mbf{x}_k \\ \mbf{p}_k \ebm 
    &\geq 0 ,
    \\
    \bbm \delta \mbf{x}_k \\ \mbf{p}_k \ebm^\trans
    \mbf{M}
    \bbm \delta \mbf{x}_k \\ \mbf{p}_k \ebm 
    &\geq 0,
    \label{eq:QC_general}
\end{align}
where $\mbf{M} = \mbf{U}^\trans \mbs{\Lambda}\mbf{U}$. The same steps may be used for the nonlinear equation $\acute{\mbs{\Delta}}_k$ in \eqref{eq:delta}. 
In summary, the quadratic constraint inequality bounds the input-output relationship of the nonlinear function $\mbs{\Delta}$. 

Many nonlinear equations, such as polynomials, cannot be globally bounded with quadratic constraints. 
In this instance, the quadratic constraint is only able to capture the nonlinearities locally in a subset $\mathbb{B}$ that is predefined by the user as a tuning knob as to how much nonlinearities are to be considered.

\section{Bounding of Covariance for Nonlinear Equations}
\label{section:covariance_bounding}

This section provides the main results on bounding the estimated state error covariance for both the process equation and measurement equation 

\begin{theorem}
    \label{theorem:dt_covariance}
Consider the time update equation described by \eqref{eq:eom_dt_1}-\eqref{eq:eom_dt_last} with white process noise satisfying 
$\mathbb{E}(\mbf{w}_k\mbf{w}_k^\trans)=\mbf{Q}$, 
an estimation error covariance at time step $k$ of 
$\mathbb{E} \left(\delta \mbf{x}_k \delta \mbf{x}_k^\trans\right)=\mbf{P}^+_k$  and 
at $k+1$ of 
$\mathbb{E} \left(\delta \mbf{x}_{k+1} \delta \mbf{x}_{k+1}^\trans \right)=\mbf{P}^-_{k+1}$.
Furthermore, $\mbf{p}_k=\mbs{\Delta}(\mbs{\theta}_k)$ satisfies \eqref{eq:QC_general}. 
If there exist $\mbf{Z} \in \mathbb{S}^n$, $\mbf{Y} \in \mathbb{S}^n$ and $\xi\geq 0$ satisfying
\begin{align}
    \mbf{Z}-\mbf{Y} &< 0,  \label{eq:LMI1_dt0}
    \\
    \bbm 
        \mbf{A}_k^\trans (\mbf{Z}-\mbf{Y}) \mbf{A}_k  & \mbf{A}_k^\trans \mbf{ZB}_p \\
        \mbf{B}_p^\trans \mbf{Z} \mbf{A}_k & \mbf{B}_p^\trans \mbf{Z} \mbf{B}_p
    \ebm +
    \xi \mbf{M}
    &\leq 0 ,
    \label{eq:LMI1_dt}
\end{align}
then 
\begin{align}
    \trace(\mbf{P}_{k+1}^- \mbf{Z})
    &\leq 
    \trace( \mbf{B}_w \mbf{Q} \mbf{B}_w^\trans \mbf{Z} ) + 
    \trace{(\mbf{A}_k \mbf{P}^+_k \mbf{A}_k^\trans \mbf{Y})}.
    \label{eq:claim_dt}
\end{align}
\end{theorem}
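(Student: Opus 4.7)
The plan is to expand $\mbfpp_{k+1}^- = \ex{\dx_{k+1}\dx_{k+1}^\trans}$ using the decomposition \eqref{eq:eom_dt_1}, multiply by $\mbfz$ and take the trace, and then apply the LMI \eqref{eq:LMI1_dt} together with the quadratic constraint \eqref{eq:QC_general} in an S-procedure step to arrive at \eqref{eq:claim_dt}.

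First I would substitute $\dx_{k+1} = \mbf{A}_k \dx_k + \mbf{B}_p \mbf{p}_k + \mbf{B}_w \mbf{w}_k$ into $\dx_{k+1}\dx_{k+1}^\trans$ and take expectations. Using that $\mbf{w}_k$ is zero-mean and independent of both $\dx_k$ and $\mbf{p}_k$, the cross terms involving $\mbf{w}_k$ vanish and the $\mbf{w}_k\mbf{w}_k^\trans$ term contributes exactly $\mbf{B}_w \mbf{Q} \mbf{B}_w^\trans$. Multiplying the resulting identity by $\mbfz$ on the right, taking the trace, and using the symmetry of $\mbfz$ to collapse the two cross-moment traces onto the off-diagonal blocks, one obtains
\[\trace(\mbfpp_{k+1}^- \mbfz) - \trace(\mbf{B}_w \mbf{Q} \mbf{B}_w^\trans \mbfz) = \ex{\bbm \dx_k \\ \mbf{p}_k \ebm^\trans \bbm \mbf{A}_k^\trans \mbfz \mbf{A}_k & \mbf{A}_k^\trans \mbfz \mbf{B}_p \\ \mbf{B}_p^\trans \mbfz \mbf{A}_k & \mbf{B}_p^\trans \mbfz \mbf{B}_p \ebm \bbm \dx_k \\ \mbf{p}_k \ebm}.\]
This is exactly the block quadratic form appearing in \eqref{eq:LMI1_dt}, except the $(1,1)$ block reads $\mbf{A}_k^\trans \mbfz \mbf{A}_k$ rather than $\mbf{A}_k^\trans(\mbfz-\mbf{Y})\mbf{A}_k$.

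Second I would pre- and post-multiply \eqref{eq:LMI1_dt} by $[\dx_k^\trans,\ \mbf{p}_k^\trans]$ and its transpose and take expectations. Splitting the $(1,1)$ block as $\mbf{A}_k^\trans \mbfz \mbf{A}_k - \mbf{A}_k^\trans \mbf{Y}\mbf{A}_k$ and moving the $\mbf{Y}$ piece to the right-hand side contributes $\ex{\dx_k^\trans \mbf{A}_k^\trans \mbf{Y}\mbf{A}_k \dx_k} = \trace(\mbf{A}_k \mbfpp_k^+ \mbf{A}_k^\trans \mbf{Y})$. The $\xi\mbf{M}$ residual is nonnegative in expectation by \eqref{eq:QC_general} with $\xi \geq 0$, so it can be dropped to strengthen the inequality in the desired direction. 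Combining this with the identity from the first step yields \eqref{eq:claim_dt} immediately.

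The conceptual heart of the argument is the S-procedure step: the LMI certifies a pointwise inequality on the deterministic block quadratic form, and the quadratic constraint allows the $\xi\mbf{M}$ term to be absorbed once expectations are taken. The routine obstacle is bookkeeping, in particular verifying that the two off-diagonal cross-moment traces $\trace(\mbf{A}_k \ex{\dx_k \mbf{p}_k^\trans} \mbf{B}_p^\trans \mbfz)$ and $\trace(\mbf{B}_p \ex{\mbf{p}_k \dx_k^\trans} \mbf{A}_k^\trans \mbfz)$ fold cleanly into the off-diagonal blocks of \eqref{eq:LMI1_dt}, which uses the symmetry of $\mbfz$. Condition \eqref{eq:LMI1_dt0}, $\mbfz - \mbf{Y} < 0$, is not used in the chain of inequalities itself; it plays the role of a feasibility slack on the $(1,1)$ block of \eqref{eq:LMI1_dt} needed to admit a useful nontrivial $\mbfz$.
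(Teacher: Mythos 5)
Your proposal is correct and follows essentially the same route as the paper's proof: expand $\delta\mbf{x}_{k+1}$ via \eqref{eq:eom_dt_1}, pre- and post-multiply \eqref{eq:LMI1_dt} by $\bbm \delta\mbf{x}_k^\trans & \mbf{p}_k^\trans\ebm$ and its transpose, drop the nonnegative $\xi\mbf{M}$ term by the S-procedure, and take trace and expectation using uncorrelatedness of $\mbf{w}_k$ with $\delta\mbf{x}_k$ and $\mbf{p}_k$. The only difference is cosmetic ordering (you form the covariance identity first and apply the LMI second, while the paper works pointwise throughout and takes expectations last), and your observation that \eqref{eq:LMI1_dt0} is never invoked in the inequality chain matches the paper's proof as well.
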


\begin{proof}
    Let $\mbf{K} \in \mathbb{S}^n$ be a free variable and $\mbf{r} \in \mbbRv{n}$. The quadratic function $f(\mbf{K}, \mbf{r})=\mbf{r}^\trans \mbf{K} \mbf{r}$ is evaluated at consecutive time step 
\begin{align}
    f&(\mbf{Z}, \delta \mbf{x}_{k+1}) 
    - f(\mbf{Y}, \mbf{A}_k \delta \mbf{x}_k) =  
    \delta \mbf{x}_{k+1}^\trans \mbf{Z} \delta \mbf{x}_{k+1} - 
    \delta \mbf{x}_k^\trans  \mbf{A}_k^\trans \mbf{Y} \mbf{A}_k \delta \mbf{x}_k 
    \\
    &=
    (\mbf{A}_k \delta \mbf{x}_k + \mbf{B}_p \mbf{p}_k + \mbf{B}_w \mbf{w}_k)^\trans 
    \mbf{Z}
    (\mbf{A}_k \delta \mbf{x}_k + \mbf{B}_p \mbf{p}_k + \mbf{B}_w \mbf{w}_k)
    - \delta \mbf{x}_k^\trans \mbf{A}_k^\trans \mbf{Y} \mbf{A}_k \delta  \mbf{x}_k 
    \\
    &= 
    \delta \mbf{x}_k^\trans \left( \mbf{A}_k^\trans (\mbf{Z}-\mbf{Y}) \mbf{A}_k \right) \delta \mbf{x}_k 
    + \mbf{p}_k^\trans \mbf{B}_p^\trans \mbf{Z} \mbf{B}_p \mbf{p}_k
    + \mbf{w}_k^\trans \mbf{B}_w^\trans \mbf{Z} \mbf{B}_w \mbf{w}_k + \notag \\
    & \qquad \qquad
    2 \delta \mbf{x}_k^\trans \mbf{A}_k^\trans \mbf{Z} \mbf{B}_p \mbf{p}_k
    + 2 \delta \mbf{x}_k^\trans \mbf{A}_k^\trans \mbf{Z} \mbf{B}_w \mbf{w}_k 
    + 2 \mbf{p}_k^\trans \mbf{B}_p^\trans \mbf{Z} \mbf{B}_w \mbf{w}_k .
    \label{eq:Lyap1}
\end{align}
Pre- and post-multiplying \eqref{eq:LMI1_dt} by $\bbm \delta \mbf{x}_k^\trans & \mbf{p}_k^\trans \ebm^\trans$ and $\bbm \delta \mbf{x}_k^\trans & \mbf{p}_k^\trans \ebm$ respectively, yields
\begin{align}
    \delta \mbf{x}_k^\trans \left(\mbf{A}_k^\trans (\mbf{Z}-\mbf{Y}) \mbf{A}_k \right) \delta \mbf{x}_k 
    + 2 \delta \mbf{x}_k^\trans \mbf{A}_k^\trans \mbf{Z} \mbf{B}_p \mbf{p}_k
    + \mbf{p}_k^\trans \mbf{B}_p^\trans \mbf{Z} \mbf{B}_p \mbf{p}_k
    +
    \xi_r
    \bbm \delta \mbf{x}_k \\ \mbf{p}_k \ebm^\trans
    \mbf{M}
    \bbm \delta \mbf{x}_k \\ \mbf{p}_k \ebm 
    & \leq 0.
    \label{eq:Lyap2}
\end{align}
Recognizing that \eqref{eq:QC_general} holds, the last sum term of \eqref{eq:Lyap2} is positive which implies its absence from the inequality still infers \eqref{eq:Lyap2} (S-procedure~\cite[p.~23]{ref_26_boyd1994LMIcontrol}).
The inequality of \eqref{eq:Lyap2} with the last term removed is substituted into \eqref{eq:Lyap1} to yield
\begin{align}
    \delta \mbf{x}_{k+1}^\trans \mbf{Z} \delta \mbf{x}_{k+1} - 
    \delta \mbf{x}_k^\trans \mbf{A}_k^\trans \mbf{Y} \mbf{A}_k \delta \mbf{x}_k
    &\leq 
    \mbf{w}_k^\trans \mbf{B}_w^\trans \mbf{Z} \mbf{B}_w \mbf{w}_k 
    + 2 \delta \mbf{x}_k^\trans \mbf{A}_k^\trans \mbf{Z} \mbf{B}_w \mbf{w}_k 
    + 2 \mbf{p}_k^\trans \mbf{B}_p^\trans \mbf{Z} \mbf{B}_w \mbf{w}_k .
\end{align}

Taking the trace and then the expection of both sides yields 
\begin{align}
    \mathbb{E}(\trace (\delta \mbf{x}_{k+1}^\trans \mbf{Z} \delta \mbf{x}_{k+1})) - 
    \mathbb{E}(\trace(\delta \mbf{x}_k^\trans \mbf{A}_k^\trans \mbf{Y} \mbf{A}_k \delta \mbf{x}_k))
    &\leq 
    \notag \\
    \mathbb{E}(\trace(\mbf{w}_k^\trans \mbf{B}_w^\trans \mbf{Z} \mbf{B}_w \mbf{w}_k ))
    + \mathbb{E}(\trace(2 \delta \mbf{x}_k^\trans \mbf{A}_k^\trans \mbf{Z} \mbf{B}_w \mbf{w}_k ))
    &+ \mathbb{E}(\trace(2 \mbf{p}_k^\trans \mbf{B}_p^\trans \mbf{Z} \mbf{B}_w \mbf{w}_k )).
\end{align}
Applying the cyclic property of the trace to the terms of this inequality results in 
\begin{align}    
    \mathbb{E}(\trace(\delta \mbf{x}_{k+1} \delta \mbf{x}_{k+1}^\trans \mbf{Z} ) )- 
    \mathbb{E}(\trace(\mbf{A}_k \delta \mbf{x}_k \delta \mbf{x}_k^\trans \mbf{A}_k^\trans \mbf{Y} ))
    &\leq 
    \notag \\
    \mathbb{E}(\trace(\mbf{B}_w \mbf{w}_k \mbf{w}_k^\trans \mbf{B}_w^\trans \mbf{Z}))
    + 2 \mathbb{E}(\trace(\mbf{w}_k \delta \mbf{x}_k^\trans \mbf{A}^\trans \mbf{Z} \mbf{B}_w))
    &+ 2 \mathbb{E}(\trace(\mbf{w}_k \mbf{p}_k^\trans \mbf{B}_p^\trans \mbf{Z} \mbf{B}_w )).
\end{align}

The trace is a linear operator, thus the expectation operator may be brought inside the trace to yield
\begin{align}    
    \trace(\mathbb{E}(\delta \mbf{x}_{k+1} \delta \mbf{x}_{k+1}^\trans) \mbf{Z}  )- 
    \trace(\mbf{A}_k \mathbb{E}(\delta \mbf{x}_k \delta \mbf{x}_k^\trans) \mbf{A}_k^\trans \mbf{Y} )
    &\leq 
    \notag \\
    \trace( \mbf{B}_w \mathbb{E}( \mbf{w}_k \mbf{w}_k^\trans) \mbf{B}_w^\trans \mbf{Z})
    + 2 \trace( \mathbb{E}(\mbf{w}_k \delta \mbf{x}_k^\trans) \mbf{A}^\trans \mbf{Z} \mbf{B}_w)
    &+ 2 \trace( \mathbb{E}(\mbf{w}_k \mbf{p}_k^\trans) \mbf{B}_p^\trans \mbf{Z} \mbf{B}_w ).
    \label{eq:dt_proof_interm}
\end{align}

Finally, the covariance of the process noise is defined as $\mbf{Q}=\mathbb{E}(\mbf{w}_k \mbf{w}_k^\trans)$, the estimation error covariances at time steps $k+1$ and $k$ are respectively $\mbf{P}^-_{k+1} = \mathbb{E} (\delta \mbf{x}_{k+1} \delta \mbf{x}_{k+1}^\trans)$ and $\mbf{P}^+_k = \mathbb{E}(\delta \mbf{x}_k \delta \mbf{x}_k^\trans)$. Additionally, the last two terms involve uncorrelated random processes, therefore $\mathbb{E}(\mbf{w}_k \delta \mbf{x}_k^\trans)=0$ and $\mathbb{E}(\mbf{w}_k \mbf{p}_k^\trans)=0$. 
Substituting these terms into \eqref{eq:dt_proof_interm} completes the proof.
\end{proof}

The equations \eqref{eq:LMI1_dt0} and \eqref{eq:LMI1_dt} are Linear Matrix Inequality (LMI) equations with respect to $\mbf{Y}$ and $\mbf{Z}$. Either variables or both variables may be used as free variables within a SDP. 
In the case when the system is linear, both \eqref{eq:LMI1_dt0} and \eqref{eq:LMI1_dt} are not needed to account for any nonlinearities, therefore
it can be shown that both variables can be set to $\mbf{Y}=\mbf{Z}=\bone$ in \eqref{eq:claim_dt}, this returns the Stein equation that's inside the trace operator. 

A key innovation in Theorem~\ref{theorem:dt_covariance} is the inclusion of two 
matrix variables $\mbf{Z}$ and $\mbf{Y}$ to form $\mbf{A}_k^\trans (\mbf{Z}-\mbf{Y}) \mbf{A}_k$ in the first entry of \eqref{eq:LMI1_dt}. There are no restrictions to the definiteness of 
$\mbf{Z}$ and $\mbf{Y}$ individually, rather there is only the relative definiteness constraint $\mbf{Z}-\mbf{Y} <0$. 
This is in contrast to typical discrete-time LMI formulations that feature a term analogous to 
$\mbf{A}_k^\trans \mbf{Z} \mbf{A}_k-\mbf{Z}<0, \mbf{Z}>0$ \cite[p.~97]{duan2013lmis} that implies feasibility only when the absolute value of all eigenvalues of $\mbf{A}_k$ are less than $1$, or equivalently, the discrete-time system $\mbf{x}_{k+1}=\mbf{A}_k \mbf{x}_k$ is asymptotically stable. The proposed Theorem~\ref{theorem:dt_covariance} is therefore capable of handling unstable discrete-time systems, whereas 
similar LMIs based on the stochastic interpretation of the $\mathcal{H}_2$ norm are only applicable to asymptotically stable discrete-time systems.

Bounding of the covariance in the measurement update follows similar steps to the process covariance update when applied on the \textit{a posteriori} state estimate error. This is shown in the following theorem. 

\begin{theorem}
\label{theorem:meas_bound}
    Considering the measurement update equation of \eqref{eq:meas_comp}-\eqref{eq:meas_end} where measurement white noise satisfying $\mbf{R}=\ex{\mbf{v}_k \mbf{v}_k^\trans}$, a priori estimation error covariance at time step $k$ as $\mbfpp_{k}^- = \ex{\dx_k \dx_k^\trans}$. Furthermore, $\mbs{\rho}_k = \mbs{\Delta}(\mbs{\mu}_k)$ satisfies the quadratic constraint \eqref{eq:QC_general}. If there exists $\mbfz \in \mathbb{S}^n$, $\mbfyy \in \mathbb{S}^n$ and $\xi \in \mathbb{R}_{\geq0}$ satisfying
    \begin{align}
        \bbm
           -\mbfyy& \mbf{H}_k^\trans \mbfk^\trans \mbfz \mbfk \mbfb_\rho - \mbfz \mbfk \mbfb_\rho \\
           * & \mbfb_\rho^\trans \mbfk^\trans \mbfz \mbfk \mbfb_\rho
        \ebm
        + \xi \mbf{M}
        &\leq 0,
        \label{eq:theorem}
    \end{align}
    then
    \begin{align}
        \trace(\mbfpp^+_k \mbfz) &\leq 
        \trace \left( 
            \mbfpp^-_k 
            (\bone - \mbfk \mbfh)^\trans \mbfz (\bone - \mbfk \mbfh)
        \right)
        + 
        \trace ( \mbfk \mbfb_v \mbf{R} \mbfb_v^\trans \mbfk^\trans \mbfz)
        + 
        \trace ( \mbfpp_k^- \mbfyy) .
        \label{eq:result}
    \end{align}
\end{theorem}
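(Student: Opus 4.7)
The plan is to mirror the proof of Theorem~\ref{theorem:dt_covariance}, adapted to the measurement update by using the a posteriori error $\hat{\mbfe}_k$ in place of the propagated state $\delta \mbf{x}_{k+1}$. First I would substitute \eqref{eq:meas_comp} into \eqref{eq:error_apos} to obtain the closed-form expression $\hat{\mbfe}_k = (\bone - \mbfk \mbfh) \dx_k - \mbfk \mbfb_\rho \mbs{\rho}_k - \mbfk \mbfb_v \mbfv_k$, which plays the role that $\delta \mbf{x}_{k+1}$ played in the time-update theorem. I would then form the quadratic difference $\hat{\mbfe}_k^\trans \mbfz \hat{\mbfe}_k - \dx_k^\trans \mbfyy \dx_k$ and expand it: this produces a $\dx_k$-quadratic with weight $(\bone-\mbfk\mbfh)^\trans \mbfz (\bone-\mbfk\mbfh) - \mbfyy$, a $\mbs{\rho}_k$-quadratic with weight $\mbfb_\rho^\trans \mbfk^\trans \mbfz \mbfk \mbfb_\rho$, a $\mbfv_k$-quadratic, and three cross terms.

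The crucial observation is that the $\dx_k$--$\mbs{\rho}_k$ cross coefficient $(\bone-\mbfk\mbfh)^\trans \mbfz (-\mbfk \mbfb_\rho)$ simplifies to $\mbfh^\trans \mbfk^\trans \mbfz \mbfk \mbfb_\rho - \mbfz \mbfk \mbfb_\rho$, which is exactly the off-diagonal block of \eqref{eq:theorem}. Pre- and post-multiplying \eqref{eq:theorem} by $\bbm \dx_k^\trans & \mbs{\rho}_k^\trans \ebm$ and its transpose, and invoking the S-procedure with the quadratic constraint \eqref{eq:QC_general} to drop the nonnegative $\xi \mbf{M}$ contribution, then yields an upper bound on precisely the $-\dx_k^\trans \mbfyy \dx_k$ piece, the $\dx_k$--$\mbs{\rho}_k$ cross piece, and the $\mbs{\rho}_k$-quadratic piece. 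Substituting this bound back into the expansion eliminates those three groups of terms and leaves $\dx_k^\trans (\bone-\mbfk\mbfh)^\trans \mbfz (\bone-\mbfk\mbfh) \dx_k$ together with the $\mbfv_k$-dependent contributions on the right-hand side.

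I would then take the trace of both sides, use the cyclic property to align each covariance matrix against $\mbfz$ or $\mbfyy$, and take expectations. The $\dx_k$--$\mbfv_k$ and $\mbs{\rho}_k$--$\mbfv_k$ cross expectations vanish because $\mbfv_k$ is white and independent of $\dx_k$ (and hence of $\mbs{\rho}_k$, which depends on $\dx_k$ only through $\mbs{\mu}$). Identifying $\mbfpp_k^- = \ex{\dx_k \dx_k^\trans}$, $\mbfpp_k^+ = \ex{\hat{\mbfe}_k \hat{\mbfe}_k^\trans}$, and $\mbf{R} = \ex{\mbfv_k \mbfv_k^\trans}$, and rearranging, produces \eqref{eq:result}. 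The main obstacle is the algebraic bookkeeping at the S-procedure step: one must verify that the specific block structure chosen in \eqref{eq:theorem} absorbs the $\mbs{\rho}_k$-dependent terms cleanly while leaving the linear feedback quadratic $(\bone-\mbfk\mbfh)^\trans \mbfz (\bone-\mbfk\mbfh)$ intact, which explains why $\mbfyy$ appears only in the $(1,1)$ block of the LMI rather than being combined with the full linear quadratic form.
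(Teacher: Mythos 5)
Your proposal is correct and follows essentially the same route as the paper's proof: expand the quadratic form $\hat{\mbfe}_k^\trans \mbfz \hat{\mbfe}_k$ of the a posteriori error, pre- and post-multiply the LMI \eqref{eq:theorem} by $\bbm \dx_k^\trans & \mbs{\rho}_k^\trans \ebm$ and its transpose, invoke the S-procedure with \eqref{eq:QC_general} to absorb the $\mbs{\rho}_k$-dependent terms into $\dx_k^\trans \mbfyy \dx_k$, and finish with the trace, cyclic property, and the vanishing cross-expectations with $\mbfv_k$. The only cosmetic difference is that you group $(\bone-\mbfk\mbfh)\dx_k$ up front, while the paper expands $\dy_k$ term by term before collecting the same blocks.
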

\begin{proof}
    
    Let $\mbfxx \in \mathbb{S}^n$ be a free variable and $\mbf{r} \in \mathbb{R}^n$. The quadratic function $\tau(\mbfxx, \mbf{r}) = \mbf{r}^\trans \mbfxx \mbf{r}$ is evaluated as $\tau(\mbfz, \hat{\mbfe}_k)$ with substitution of \eqref{eq:error_apos} as
    \begin{align}
        \hat{\mbfe}_k^\trans \mbfz \hat{\mbfe}_k 
        &= (\dx_k - \mbfk \dy_k)^\trans \mbfz (\dx_k - \mbfk \dy_k)
        \\
        &= 
        \dx_k^\trans \mbfz \dx_k - \dx_k^\trans \mbfz \mbfk \dy_k - \dy_k^\trans \mbfk^\trans \mbfz \dx_k
        + \dy_k^\trans \mbfk^\trans \mbfz \mbfk \dy_k .       
        \label{eq:quad_first}
    \end{align}
    Substituting \eqref{eq:meas_comp} into \eqref{eq:quad_first} yields
    \begin{align}
        \hat{\mbfe}_k^\trans \mbfz \hat{\mbfe}_k 
        &=
        \dx_k^\trans \mbfz \dx_k - 
        \dx_k^\trans \mbfz \mbfk (\mbfh \dx_k + \mbfb_\rho \mbs{\rho}_k + \mbfb_v \mbfv_k) 
        \notag \\
        & \qquad -(\mbfh \dx_k + \mbfb_\rho \mbs{\rho}_k + \mbfb_v \mbfv_k)^\trans \mbfk^\trans \mbfz \dx_k
        \notag \\ & \qquad 
        + (\mbfh \dx_k + \mbfb_\rho \mbs{\rho}_k + \mbfb_v \mbfv_k)^\trans \mbfk^\trans \mbfz \mbfk (\mbfh \dx_k + \mbfb_\rho \mbs{\rho}_k + \mbfb_v \mbfv_k)
        \\
        &= 
        \dx_k^\trans  \left( \mbfz - \mbfz \mbfk \mbfh - \mbfh^\trans \mbfk^\trans \mbfz + \mbfh^\trans \mbfk^\trans \mbfz \mbfk \mbfh \right) \dx_k
        \notag \\ & \qquad  
        -\dx_k^\trans \mbfz \mbfk \mbfb_\rho \mbs{\rho}_k 
        -\dx_k^\trans \mbfz \mbfk \mbfb_v \mbfv_k
        - \mbs{\rho}_k^\trans \mbfb_\rho^\trans \mbfk^\trans \mbfz \dx_k
        - \mbfv_k^\trans \mbfb_v^\trans \mbfk^\trans \mbfz \dx_k
        \notag \\ & \qquad 
        + \dx^\trans_k \mbfh^\trans  \mbfk^\trans \mbfz \mbfk  \mbfb_\rho \mbfp_p 
        + \dx^\trans_k \mbfh^\trans  \mbfk^\trans \mbfz \mbfk \mbfb_v \mbfv_k
        \notag \\ & \qquad 
        + \mbs{\rho}_k^\trans \mbfb_\rho^\trans  \mbfk^\trans \mbfz \mbfk  \mbfh \dx_k
        + \mbs{\rho}_k^\trans \mbfb_\rho^\trans  \mbfk^\trans \mbfz \mbfk  \mbfb_\rho \mbfp_p 
        + \mbs{\rho}_k^\trans \mbfb_\rho^\trans  \mbfk^\trans \mbfz \mbfk  \mbfb_v \mbfv_k
        \notag \\ & \qquad 
        + \mbfv_k^\trans \mbfb_v^\trans  \mbfk^\trans \mbfz \mbfk  \mbfh \dx_k
        + \mbfv_k^\trans \mbfb_v^\trans  \mbfk^\trans \mbfz \mbfk  \mbfb_\rho \mbfp_p 
        + \mbfv_k^\trans \mbfb_v^\trans  \mbfk^\trans \mbfz \mbfk  \mbfb_v \mbfv_k .\label{eq:quad_second}
    \end{align}

    Multiplying \eqref{eq:theorem} left and right by $\bbm \dx_k^\trans & \mbs{\rho}^\trans_k \ebm$
    and $\bbm \dx_k^\trans & \mbs{\rho}^\trans_k \ebm^\trans$ on both sides respectively yields
    \begin{align}
        \dx^\trans_k \mbfh^\trans  \mbfk^\trans \mbfz \mbfk  \mbfb_\rho \mbfp_p 
        -\dx_k^\trans \mbfz \mbfk \mbfb_\rho \mbs{\rho}_k 
        + \mbs{\rho}_k^\trans \mbfb_\rho^\trans  \mbfk^\trans \mbfz \mbfk  \mbfh \dx_k
        - \mbs{\rho}_k^\trans \mbfb_\rho^\trans \mbfk^\trans  \mbfz & \dx_k
        \notag \\
        + \mbs{\rho}_k^\trans \mbfb_\rho^\trans  \mbfk^\trans \mbfz \mbfk  \mbfb_\rho \mbfp_p 
        + \xi \left( \bbm \dx_k^\trans & \mbs{\rho}^\trans_k \ebm \mbf{M} \bbm \dx_k^\trans & \mbs{\rho}^\trans_k \ebm^\trans \right)
        \leq & \dx_k^\trans \mbfyy \dx_k . \label{eq:theorem2_expand}
    \end{align}
    Recognizing that \eqref{eq:QC_general} holds, the last sum term on the left side of \eqref{eq:theorem2_expand} is positive semidefinite which implies its absence from the inequality still infers \eqref{eq:theorem2_expand}. 
    The inequality of \eqref{eq:theorem2_expand} with the last term removed is substituted into \eqref{eq:quad_second} to yield 
\begin{align}
    \hat{\mbfe}_k^\trans \mbfz \hat{\mbfe}_k 
        &\leq
        \dx_k^\trans  \left( \mbfz - \mbfz \mbfk \mbfh - \mbfh^\trans \mbfk^\trans \mbfz + \mbfh^\trans \mbfk^\trans \mbfz \mbfk \mbfh \right) \dx_k
        \notag \\ & \qquad  
        -\dx_k^\trans \mbfz \mbfk \mbfb_v \mbfv_k
        - \mbfv_k^\trans \mbfb_v^\trans \mbfk^\trans \mbfz \dx_k
        + \dx^\trans_k \mbfh^\trans  \mbfk^\trans \mbfz \mbfk \mbfb_v \mbfv_k
        + \mbs{\rho}_k^\trans \mbfb_\rho^\trans  \mbfk^\trans \mbfz \mbfk  \mbfb_v \mbfv_k 
        \notag \\ & \qquad 
        + \mbfv_k^\trans \mbfb_v^\trans  \mbfk^\trans \mbfz \mbfk  \mbfh \dx_k
        + \mbfv_k^\trans \mbfb_v^\trans  \mbfk^\trans \mbfz \mbfk  \mbfb_\rho \mbfp_p 
        + \mbfv_k^\trans \mbfb_v^\trans  \mbfk^\trans \mbfz \mbfk  \mbfb_v \mbfv_k 
        + \dx_k^\trans \mbfyy \dx_k .
\end{align}

Taking the trace and then the expectation of both sides yield
\begin{align}
    \mathbb{E} ( \trace ( &\hat{\mbfe}_k^\trans \mbfz \hat{\mbfe}_k ) )
        \leq
        \mathbb{E}\left( \right. \trace (
        \dx_k^\trans  \left( \mbfz - \mbfz \mbfk \mbfh - \mbfh^\trans \mbfk^\trans \mbfz + \mbfh^\trans \mbfk^\trans \mbfz \mbfk \mbfh \right) \dx_k 
        \notag \\ &  
        -\dx_k^\trans \mbfz \mbfk \mbfb_v \mbfv_k
        - \mbfv_k^\trans \mbfb_v^\trans \mbfk^\trans \mbfz \dx_k
        + \dx^\trans_k \mbfh^\trans  \mbfk^\trans \mbfz \mbfk \mbfb_v \mbfv_k
        + \mbs{\rho}_k^\trans \mbfb_\rho^\trans  \mbfk^\trans \mbfz \mbfk  \mbfb_v \mbfv_k 
        \notag \\ &  
        + \mbfv_k^\trans \mbfb_v^\trans  \mbfk^\trans \mbfz \mbfk  \mbfh \dx_k
        + \mbfv_k^\trans \mbfb_v^\trans  \mbfk^\trans \mbfz \mbfk  \mbfb_\rho \mbfp_p 
        + \mbfv_k^\trans \mbfb_v^\trans  \mbfk^\trans \mbfz \mbfk  \mbfb_v \mbfv_k 
        + \dx_k^\trans \mbfyy \dx_k ) \left. \right).
\end{align}
Applying the cyclic property of the trace to the terms of this inequality yields
\begin{align}
    \mathbb{E} ( \trace ( & \hat{\mbfe}_k \hat{\mbfe}_k^\trans \mbfz  ) )
        \leq
        \mathbb{E}\left( \right. \trace (
        \dx_k \dx_k^\trans  \left( \mbfz - \mbfz \mbfk \mbfh - \mbfh^\trans \mbfk^\trans \mbfz + \mbfh^\trans \mbfk^\trans \mbfz \mbfk \mbfh \right)  
        \notag \\ &  
        - \mbfv_k \dx_k^\trans \mbfz \mbfk \mbfb_v 
        - \dx_k \mbfv_k^\trans \mbfb_v^\trans \mbfk^\trans \mbfz 
        + \mbfv_k \dx^\trans_k \mbfh^\trans  \mbfk^\trans \mbfz \mbfk \mbfb_v 
        + \mbfv_k \mbs{\rho}_k^\trans \mbfb_\rho^\trans  \mbfk^\trans \mbfz \mbfk  \mbfb_v  
        \notag \\ &  
        + \dx_k \mbfv_k^\trans \mbfb_v^\trans  \mbfk^\trans \mbfz \mbfk  \mbfh 
        + \mbfp_p  \mbfv_k^\trans \mbfb_v^\trans  \mbfk^\trans \mbfz \mbfk  \mbfb_\rho 
        + \mbfk  \mbfb_v \mbfv_k  \mbfv_k^\trans \mbfb_v^\trans  \mbfk^\trans \mbfz  
        + \dx_k \dx_k^\trans \mbfyy  ) \left. \right).
\end{align}
Since the trace is a linear operator, the expectation operator is brought inside the trace to yield
\begin{align}
    \trace ( \mathbb{E} (  &\hat{\mbfe}_k \hat{\mbfe}_k^\trans ) \mbfz   )
        \leq
        \trace\left( \right.  \mathbb{E}(
        \dx_k \dx_k^\trans)  \left( \mbfz - \mbfz \mbfk \mbfh - \mbfh^\trans \mbfk^\trans \mbfz + \mbfh^\trans \mbfk^\trans \mbfz \mbfk \mbfh \right)  
        \notag \\ & \qquad 
        - \mathbb{E}(\mbfv_k \dx_k^\trans) \mbfz \mbfk \mbfb_v 
        - \mathbb{E}(\dx_k \mbfv_k^\trans) \mbfb_v^\trans \mbfk^\trans \mbfz 
        + \mathbb{E}(\mbfv_k \dx^\trans_k) \mbfh^\trans  \mbfk^\trans \mbfz \mbfk \mbfb_v 
        \notag \\ & \qquad
        + \mathbb{E}(\mbfv_k \mbs{\rho}_k^\trans) \mbfb_\rho^\trans  \mbfk^\trans \mbfz \mbfk  \mbfb_v  
        + \mathbb{E}(\dx_k \mbfv_k^\trans) \mbfb_v^\trans  \mbfk^\trans \mbfz \mbfk  \mbfh 
        + \mathbb{E}(\mbfp_p  \mbfv_k^\trans) \mbfb_v^\trans  \mbfk^\trans \mbfz \mbfk  \mbfb_\rho 
        \notag \\ & \qquad
        + \mbfk  \mbfb_v  \mathbb{E}(\mbfv_k  \mbfv_k^\trans) \mbfb_v^\trans  \mbfk^\trans \mbfz 
        + \mathbb{E}(\dx_k \dx_k^\trans) \mbfyy   \left. \right).
\end{align}
A more compact term of 
$(\mbfz - \mbfz \mbfk \mbfh - \mbfh^\trans \mbfk^\trans \mbfz + \mbfh^\trans \mbfk^\trans \mbfz \mbfk \mbfh ) = (\bone - \mbfk \mbfh)^\trans \mbfz (\bone - \mbfk \mbfh)$ is used to reflect the similarities to the Kalman filter. 
Substituting $\mbfpp_k^+ = \ex{\hat{\mbfe}_k \hat{\mbfe}_k^\trans}$, $\mbfpp_{k}^- = \ex{\dx_k \dx_k^\trans}$ for the state error covariances and $\mbf{R}=\ex{\mbf{w}_k \mbf{w}_k^\trans}$. 
Recognizing that the measurement noise $\mbfv_k$ is uncorrelated to the states $\dx_k$ and $\mbfp$ allows substitution of 
$\ex{\dx_k^\trans \mbfv_k } = \ex{\mbfv_k \dx_k^\trans}=\bzero$, $\ex{\mbfv_k \mbfp_p^\trans }=\ex{\mbfp_p  \mbfv_k^\trans }=\bzero$ yields
    \begin{align}
        \trace(\mbfpp^+_k \mbfz) &\leq 
        \trace \left( 
            \mbfpp^-_k 
            (\bone - \mbfk \mbfh)^\trans \mbfz (\bone - \mbfk \mbfh)
        \right)
        +
        \trace ( \mbfk \mbfb_v \mbf{R} \mbfb_v^\trans \mbfk^\trans \mbfz)
        + 
        \trace ( \mbfpp_k^- \mbfyy ),
        \label{eq:result_rep}
    \end{align}
which completes the proof. 
\end{proof}

It is notable that for the linear case, it can be shown that \eqref{eq:result} is an equality with $\mbfyy=\bzero$ and $\mbf{Z}=\bone$. 

\section{Description of Algorithm}
\label{section:algorithm}


The traditional EKF notably has two steps. In the first step, the time-update equation for covariance from $\mbfpp_k^+$  to $\mbfpp_{k+1}^-$ is $\mbfpp_{k+1}^- = \mbf{A}_k \mbfpp_k^+ \mbf{A}_k^\trans + \mbf{B}_w \mbf{Q} \mbf{B}_w^\trans$. 
The proposed algorithm that takes its place is 
$\trace(\mbf{P}^-_{k+1} \mbf{Z})    =    \trace( \mbf{B}_w \mbf{Q} \mbf{B}_w^\trans \mbf{Z} ) + 
\trace{(\mbf{A}_k \mbf{P}^+_k \mbf{A}_k^\trans \mbf{Y}^*)}$ with $\mbfyy^*$ being the optimal solution of the following SDP
\begin{align}
    \min_{\xi \in \mathbb{R} , \mbfyy \in \mathbb{S}^n} \quad & 
        \trace{(\mbf{A}_k \mbf{P}^+_k \mbf{A}_k^\trans \mbf{Y})}  \label{eq:sdp_eq_first}
        \\
    \textrm{s.t.} 
    \quad & 
    \mbf{Z}-\mbf{Y} \leq 0,  
    \\ \quad & 
    \xi \geq 0, 
    \\ \quad & 
    \bbm 
        \mbf{A}_k^\trans (\mbf{Z}-\mbf{Y}) \mbf{A}_k  & \mbf{A}_k^\trans \mbf{ZB}_p \\
        \mbf{B}_p^\trans \mbf{Z} \mbf{A}_k & \mbf{B}_p^\trans \mbf{Z} \mbf{B}_p
    \ebm +
    \xi \mbf{M}
    \leq 0. \label{eq:sdp_eq_last}
\end{align}
where $\mbfz$ is chosen such that $\mbfz = \onehalf \bone_i \bone_j^\trans + \onehalf \bone_j \bone_i^\trans$ to solve for the $i^{th}$ row and $j^{th}$ column entry of $\mbfpp_{k+1}^-$. 


The second step in a traditional EKF is the measurement update $\mbfpp^+_{k+1} = (\bone - \mbf{K}_{k+1} \mbf{H}) \mbfpp_{k+1}^-$. The proposed algorithm that takes its place is $\trace(\mbfpp^+_{k+1} \mbfz) =\trace \left( \mbfpp^-_{k+1} (\bone - \mbf{K}_{k+1} \mbf{H}_{k+1})^\trans \mbfz (\bone - \mbf{K}_{k+1} \mbf{H}_{k+1}) \right)
        + \trace ( \mbf{K}_{k+1} \mbfb_v \mbf{R} \mbfb_v^\trans \mbf{K}_{k+1}^\trans \mbfz )
        + \trace ( \mbfpp_{k+1}^- \mbfyy^* )$ 
with $\mbf{Y}^*$ being the optimal solution of the following SDP
\begin{align}
    \min_{\xi \in \mathbb{R} , \mbfyy \in \mathbb{S}^n} \quad & 
    \trace 
        \trace ( \mbfpp_{k+1}^- \mbfyy )
    \\
    \textrm{s.t.} \quad & \bbm
           -\mbfyy & \mbf{H}_{k+1}^\trans \mbf{K}_{k+1}^\trans \mbfz \mbf{K}_{k+1} \mbfb_\rho - \mbfz \mbf{K}_{k+1} \mbfb_\rho \\
           * & \mbfb_\rho^\trans \mbf{K}_{k+1}^\trans \mbfz \mbf{K}_{k+1} \mbfb_\rho
        \ebm
        + \xi \mbf{M}
        \leq 0
        \\
        &\xi \geq 0.
\end{align}
where $\mbfz$ is chosen such that $\mbfz = \onehalf \bone_i \bone_j^\trans + \onehalf \bone_j \bone_i^\trans$ to solve for the $i^{th}$ row and $j^{th}$ column entry of $\mbfpp_{k+1}^+$. 

It is noteworthy that the lower bounds of each entry of the covariance matrix for both algorithms may be obtained by instead choosing $\mbfz = -( \onehalf \bone_i \bone_j^\trans + \onehalf \bone_j \bone_i^\trans)$. Similarly, if the lower and upper bounds of the $\trace(\mbfpp_{k+1}^-)$ or $\trace(\mbfpp_{k+1}^+)$ are desired, choosing $\mbfz=-\bone$ and $\mbfz-\bone$ can be used respectively in both algorithms to obtain the necessary bounds. 

\section{Conclusions}
In this paper, the challenge of estimating conservative state error covariance for nonlinear systems within the context of the extended Kalman filter is investigated. 
Using quadratic constraints to bound the nonlinearities locally, the upper bounds of individual elements within the state error covariance matrix can be obtained by solving semidefinite programs.

\section*{Acknowledgments}
Suggestions and encouragement by Dr Ryan J Caverly and Dr Demoz Gebre-Egziabher is greatly appreciated by the authors. 
Sze Kwan Cheah contributions were partially supported by a University of Minnesota Informatics Institute MnDRIVE Ph.D. Graduate Assistantship.

\bibliography{biblio} 

\begin{thebibliography}{12}
\newcommand{\enquote}[1]{``#1''}
\providecommand{\natexlab}[1]{#1}
\providecommand{\url}[1]{\texttt{#1}}
\providecommand{\urlprefix}{URL }
\expandafter\ifx\csname urlstyle\endcsname\relax
  \providecommand{\doi}[1]{\discretionary{}{}{}https://doi.org/#1}\else
  \providecommand{\doi}[1]{\discretionary{}{}{}\urlstyle{rm}\url{https://doi.org/#1}}\fi

\bibitem[{Kalman(1960)}]{kalman1960new}
Kalman, R.~E., \enquote{{A New Approach to Linear Filtering and Prediction Problems},} \emph{Journal of Basic Engineering}, Vol.~82, No.~1, 1960, pp. 35--45.

\bibitem[{Simon(2006)}]{simon2006optimal}
Simon, D., \emph{Optimal state estimation: Kalman, H infinity, and nonlinear approaches}, John Wiley \& Sons, 2006.

\bibitem[{Gelb et~al.(1974)}]{gelb1974applied}
Gelb, A., et~al., \emph{Applied optimal estimation}, MIT press, 1974.

\bibitem[{Noack et~al.(2017)Noack, Sijs, Reinhardt, and Hanebeck}]{noack2017decentralized}
Noack, B., Sijs, J., Reinhardt, M., and Hanebeck, U.~D., \enquote{Decentralized data fusion with inverse covariance intersection,} \emph{Automatica}, Vol.~79, 2017, pp. 35--41.

\bibitem[{Lubold and Taylor(2022)}]{luboldClarkTaylor2022formal}
Lubold, S., and Taylor, C.~N., \enquote{Formal definitions of conservative probability distribution functions (PDFs),} \emph{Information Fusion}, Vol.~88, 2022, pp. 175--183.

\bibitem[{Forsling et~al.(2022)Forsling, Hansson, Gustafsson, Sjanic, L{\"o}fberg, and Hendeby}]{forsling2022conservative}
Forsling, R., Hansson, A., Gustafsson, F., Sjanic, Z., L{\"o}fberg, J., and Hendeby, G., \enquote{Conservative linear unbiased estimation under partially known covariances,} \emph{IEEE Transactions on Signal Processing}, Vol.~70, 2022, pp. 3123--3135.

\bibitem[{Gao et~al.(2015)Gao, Li, and Song}]{gao2015robust}
Gao, Y., Li, X.~R., and Song, E., \enquote{Robust linear estimation fusion with allowable unknown cross-covariance,} \emph{IEEE Transactions on Systems, Man, and Cybernetics: Systems}, Vol.~46, No.~9, 2015, pp. 1314--1325.

\bibitem[{Li et~al.(2016)Li, Wei, Ding, Liu, and Alsaadi}]{li2016new}
Li, W., Wei, G., Ding, D., Liu, Y., and Alsaadi, F.~E., \enquote{A new look at boundedness of error covariance of {K}alman filtering,} \emph{IEEE Transactions on Systems, Man, and Cybernetics: Systems}, Vol.~48, No.~2, 2016, pp. 309--314.

\bibitem[{Yang et~al.(2002)Yang, Wang, and Hung}]{yang2002robust}
Yang, F., Wang, Z., and Hung, Y., \enquote{Robust {K}alman filtering for discrete time-varying uncertain systems with multiplicative noises,} \emph{IEEE Transactions on Automatic Control}, Vol.~47, No.~7, 2002, pp. 1179--1183.

\bibitem[{Megretski and Rantzer(1997)}]{ref_21_megretski1997system}
Megretski, A., and Rantzer, A., \enquote{System Analysis via Integral Quadratic Constraints,} \emph{IEEE Transactions on Automatic Control}, Vol.~42, No.~6, 1997, pp. 819--830.

\bibitem[{Boyd et~al.(1994)Boyd, El~Ghaoui, Feron, and Balakrishnan}]{ref_26_boyd1994LMIcontrol}
Boyd, S., El~Ghaoui, L., Feron, E., and Balakrishnan, V., \emph{Linear Matrix Inequalities in System and Control Theory}, SIAM, Philadelphi, PA, 1994.

\bibitem[{Duan and Yu(2013)}]{duan2013lmis}
Duan, G.-R., and Yu, H.-H., \emph{LMIs in control systems: analysis, design and applications}, CRC press, 2013.

\end{thebibliography}

\end{document}